\documentclass[a4paper, 12pt]{article}
\usepackage[utf8]{inputenc}
\usepackage[english]{babel}
\usepackage{amssymb,amsmath,amsthm,eucal}
\usepackage[left=2cm, right=1.5cm, top=1cm, bottom=2cm]{geometry}

\usepackage{hyperref}
\hypersetup{
    colorlinks=true,
    linkcolor=blue,
    filecolor=magenta,      
    urlcolor=cyan,
}
\pagestyle{plain}
\setcounter{section}{-1}

\title{Gelfand-Tsetlin degeneration of shift of argument subalgebras in type D}
\author{Leonid Rybnikov and Mikhail Zavalin}
\begin{document}

\newtheorem{fact}{Fact}
\newtheorem{predl}{Theorem}
\newtheorem{prop}{Proposition}
\newtheorem{lemma}{Lemma}
\newtheorem{conj}{Conjecture}
\newtheorem{cor}{Corollary}
\newtheorem{rem}{Remark}
\newtheorem{defn}{Definition}

\maketitle
\begin{center}
{\it To our teacher Rafail Kalmanovich Gordin}
\end{center}

\medskip
\begin{abstract}
    The universal enveloping algebra of any semisimple Lie algebra $\mathfrak{g}$ contains a family of maximal commutative subalgebras, called shift of argument subalgebras, parametrized by regular Cartan elements of $\mathfrak{g}$. For $\mathfrak{g}=\mathfrak{gl}_n$ the Gelfand-Tsetlin commutative subalgebra in $U(\mathfrak{g})$ arises as some limit of subalgebras from this family. In our previous work \cite{RZ} we studied the analogous limit of shift of argument subalgebras for the Lie algebras $\mathfrak{g}=\mathfrak{sp}_{2n}$ and $\mathfrak{g}=\mathfrak{o}_{2n+1}$. We described the limit subalgebras in terms of Bethe subalgebras of twisted Yangians $Y^-(2)$ and $Y^+(2)$, respectively, and parametrized the eigenbases of these limit subalgebras in the finite dimensional irreducible highest weight representations by Gelfand-Tsetlin patterns of types C and B. In this note we state and prove similar results for the last case of classical Lie algebras, $\mathfrak{g}=\mathfrak{o}_{2n}$. We describe the limit shift of argument subalgebra in terms of the Bethe subalgebra in the twisted Yangian $Y^+(2)$ and give a natural indexing of its eigenbasis in any finite dimensional irreducible highest weight $\mathfrak{g}$-module by type D Gelfand-Tsetlin patterns.
\end{abstract}

\tableofcontents
\newpage

\section{Introduction}

\subsection{Commutative subalgebras in the universal enveloping algebra.}

The universal enveloping algebra of any semisimple Lie algebra $\mathfrak{g}$ contains a family of maximal commutative subalgebras, called shift of argument subalgebras $\mathcal{A}_{\mu}$, parametrized by regular Cartan elements $\mu$ of $\mathfrak{g}$. Their associated graded  $A_{\mu}:=\textrm{gr}\mathcal{A}_{\mu}\subset S\left(\mathfrak{g}\right)$ are subalgebras freely generated by the generators $S_i$ of centre $ZS\left(\mathfrak{g}\right)$ ($i=1,...,\textrm{rk}\mathfrak{g}$) and all their derivatives along $\mu$:
\begin{equation} \label{shift_arg_alg}
A_{\mu}:=<\partial^{k}_{\mu}S_i|i=1,...,\textrm{rk}\;\mathfrak{g};k=0,...,\textrm{deg}S_i-1>.
\end{equation}
The latter are Poisson-commutative subalgebras of maximal possible transcendence degree, first introduced by Mishchenko and Fomenko in \cite{MF}. From the above description it follows that the Poincar\`e series of such algebras have the following form:
\begin{equation}
\mathcal{P}_{\mathcal{A}_{\mu}}(x)=\mathcal{P}_{A_{\mu}}(x)=\prod\limits_{i=1}^{\textrm{rk}\mathfrak{g}}\prod\limits_{k=1}^{\textrm{deg}S_i}\frac{1}{1-x^k}.
\end{equation}

For $\mathfrak{g}=\mathfrak{gl}_n$ the Gelfand-Tsetlin commutative subalgebra in $U(\mathfrak{g})$ arises as some limit of subalgebras from this family (see \cite{V1,Sh}). Its eigenbasis in every finite dimensional irreducible $\mathfrak{g}$-module has a combinatorial description via the \textit{Gelfand-Tsetlin patterns} (or GT-patterns of type A). 

The analogous limits of shift of argument subalgebras exist for all classical Lie algebras. In $\cite{RZ}$ we studied such limit subalgebras for $\mathfrak{g}_n=\mathfrak{sp}_{2n}$ and $\mathfrak{g}_n=\mathfrak{o}_{2n+1}$. Namely,
we described the limit of the shift of argument subalgebra $\lim\limits_{\varepsilon\rightarrow0}\mathcal{A}_{\mu\left(\varepsilon\right)}$ for $\mu\left(\varepsilon\right)=F_{nn}+F_{n-1,n-1}\varepsilon+...+F_{11}\varepsilon^{n-1}\in\mathfrak{g}_n$ in terms of Bethe subalgebras $\mathcal{B}^-$ and $\mathcal{B}^+$ in the twisted Yangians $Y^-(2)$ and $Y^+(2)$, respectively (see Theorem~A of \emph{loc. cit.}). Here $F_{ij}$ is the basis of the classical Lie algebra $\mathfrak{g}_n$ expressed in the standard matrix units $E_{ij}$ as follows:
\begin{equation}
F_{ij}:=E_{ij}-sgn(i)sgn(j)E_{-j,-i}\;\;\textrm{for}\;i,j=-n,...,-1,1,...,n
\end{equation}
for $\mathfrak{g}_n=\mathfrak{sp}_{2n}$,
\begin{equation}
F_{ij}:=E_{ij}-E_{-j,-i}\;\;\textrm{for}\;i,j=-n,...,-1,1,...,n
\end{equation}
for $\mathfrak{g}_n=\mathfrak{o}_{2n}$, and
\begin{equation}
F_{ij}:=E_{ij}-E_{-j,-i}\;\textrm{for}\;i,j=-n,...,-1,0,1,...,n
\end{equation}
for $\mathfrak{g}_n=\mathfrak{o}_{2n+1}$.

Moreover, we showed that the spectrum of such limit subalgebra in any irreducible finite dimensional $\mathfrak{g}_n$-module $V_\lambda$ is indexed by the datum called Gelfand-Tsetlin patterns of types B and C (see Theorem~B of \emph{loc. cit}). In this note we prove the analogous assertions for classical Lie algebras of the (last remaining) type D, i.e. for the case of $\mathfrak{g}_n=\mathfrak{o}_{2n}$. 


\subsection{Twisted Yangian \texorpdfstring{$Y^{+}(2)$}{Lg}.} Let us recall the definition of the twisted Yangian $Y^+(2)$. For more detailed introduction to Yangians and their twisted versions we refer the reader to the classical paper by Molev, Nazarov, and Olshanski \cite{MNO}. 

\begin{defn}
The \textit{twisted Yangian} $Y^+(2)$ is an algebra generated by the coefficients $s_{ij}^{(r)}$ ($|i|=|j|=1$) of the power series
\begin{equation}
s_{ij}(u):=\sum\limits_{r\geq0}s_{ij}^{(r)}u^{-r}    
\end{equation} satisfying the following commutation relations:
\begin{eqnarray}\label{scruch3}
[s_{ij}(u),s_{kl}(v)]=\frac{1}{u-v}(s_{kj}(u)s_{il}(v)-s_{kj}(v)s_{il}(u))-\notag\\
-\frac{1}{u+v}\cdot(s_{i,-k}(u)s_{-j,l}(v)-s_{k,-i}(v)s_{-l,j}(u))+\\
+\frac{1}{u^2-v^2}(s_{k,-i}(u)s_{-j,l}(v)-s_{k,-i}(v)s_{-j,l}(u)).\notag\\
\notag
\end{eqnarray}
\begin{center}
and
\end{center}
\begin{equation}\label{scruch4}s_{-j,-i}(-u)=s_{ij}(u)+\frac{s_{ij}(u)-s_{ij}(-u)}{2u}
\end{equation}
for $|i|,|j|,|k|,|l|=1$.
\end{defn}

The twisted Yangian $Y^+(2)$ is a coideal subalgebra in the usual Yangianof the Lie algebra $\mathfrak{gl}_2$ $Y(2)=Y(\mathfrak{gl}_2)$. 

There is a distinguished maximal commutative subalgebra $\mathcal{B}^+\subset Y^+(2)$ called \emph{Bethe subalgebra}. The subalgebra $\mathcal{B}^+\subset Y^+(2)$ is generated by the elements $s_{11}^{(2m+1)}$ ($m\in\mathbb{Z}_{\geq0}$) and the center of $Y^+(2)$. For more general definition of Bethe subalgebras in twisted Yangians we refer the reader to $\cite{NO}$.
 
The Olshanski centralizer construction gives an algebra homomorphism $\varphi_i:Y^+(2)\to U\left(\mathfrak{o}_{2i}\right)^{\mathfrak{o}_{2i-2}}$ (see \cite[Theorem 2 (ii)]{RZ}, or \cite[Propositions 4.14 and 4.15]{MO}). We denote by $\mathcal{A}^+$ the commutative subalgebra of $U\left(\mathfrak{o}_{2n}\right)$ generated by $\cup_{i=1}^n\varphi_i(\mathcal{B}^+)$ and $\cup_{i=1}^nZU\left(\mathfrak{o}_{2i}\right)$. The first result of the present note is the following description for the limit quantum shift of argument subalgebra $\lim\limits_{\varepsilon\rightarrow0}\mathcal{A}_{\mu\left(\varepsilon\right)}$, generalizing \cite[Theorem~A]{RZ} to classical Lie algebras of the type $D$:
\begin{predl} \label{theorem_a}
The limit of the quantum shift of argument subalgebra $\lim\limits_{\varepsilon\rightarrow0}\mathcal{A}_{\mu\left(\varepsilon\right)}$ for $\mu\left(\varepsilon\right)=F_{nn}+F_{n-1,n-1}\varepsilon+...+F_{11}\varepsilon^{n-1}$ in the universal enveloping algebra $U\left(\mathfrak{o}_{2n}\right)$ coincides with $\mathcal{A}^+$.
\end{predl}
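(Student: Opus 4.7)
The plan is to follow the strategy used for types B and C in \cite[Theorem~A]{RZ}, adapting the arguments to the extra features of type D.

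First I would pass to the classical limit: by the PBW filtration, it suffices to prove the equality $\textrm{gr}\left(\lim_{\varepsilon\to 0}\mathcal{A}_{\mu(\varepsilon)}\right)=\textrm{gr}(\mathcal{A}^+)$ inside $S(\mathfrak{o}_{2n})$, since both sides are filtered quantizations of their associated graded Poisson-commutative subalgebras with the same Poincar\'e series, so the quantum equality follows from the classical one together with a dimension count in each filtration degree. The classical limit of $\mathcal{A}_{\mu(\varepsilon)}$ is a Mishchenko--Fomenko-type subalgebra generated by the $\varepsilon$-expansion coefficients of the derivatives $\partial^{k}_{\mu(\varepsilon)}S_i$, while $\textrm{gr}(\mathcal{A}^+)$ is generated by the principal symbols of the Olshanski--Bethe generators $\varphi_i(s_{11}^{(2m+1)})$ together with the symbols of the central elements of $U(\mathfrak{o}_{2i})$.

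The heart of the proof is the explicit identification of these two sets of generators. Expanding $\mu(\varepsilon)=F_{nn}+F_{n-1,n-1}\varepsilon+\cdots+F_{11}\varepsilon^{n-1}$, the coefficient of $\varepsilon^p$ in $\partial^{k}_{\mu(\varepsilon)}S_i$ lies naturally in the subalgebra generated by those $F_{rs}$ whose indices fall in the range appropriate to the Olshanski step $\mathfrak{o}_{2(n-p)}\subset\mathfrak{o}_{2n}$, and up to lower-order corrections these coefficients can be matched with (a) the images under $\varphi_{n-p}$ of the odd Bethe generators $s_{11}^{(2m+1)}$, and (b) the symbols of the Casimir and Pfaffian central elements of $\mathfrak{o}_{2(n-p)}$. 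Carrying out this matching uniformly in $p$ yields the inclusion $\mathcal{A}^+\subseteq\lim_{\varepsilon\to 0}\mathcal{A}_{\mu(\varepsilon)}$; the reverse inclusion then follows from a Poincar\'e series comparison, since both subalgebras must have Poincar\'e series equal to $\prod_{i=1}^{n}\prod_{k=1}^{\textrm{deg}S_i}(1-x^k)^{-1}$, and one only needs to verify the algebraic independence of the generators of $\mathcal{A}^+$ level by level along the Olshanski chain $U(\mathfrak{o}_2)\subset U(\mathfrak{o}_4)\subset\cdots\subset U(\mathfrak{o}_{2n})$.

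The main obstacle I expect is the Pfaffian generator, which is specific to type D: the center $ZU(\mathfrak{o}_{2i})$ contains a Pfaffian element of degree $i$ that is not a coefficient of $\det(u-F)$ and so does not arise from the same universal trace formula as the other Gelfand-type invariants used in \cite{RZ} for types B and C. Showing that each such Pfaffian is recovered as the leading term, in the relevant power of $\varepsilon$, of some derivative $\partial^{k}_{\mu(\varepsilon)}S_i$, and that it fits compatibly into the Olshanski filtration, will require a separate calculation using the antisymmetrized expression of the Pfaffian in the matrix entries $F_{ij}$ together with an induction on the rank. Once this bookkeeping is handled, the remaining steps are parallel to the type B and C arguments of \emph{loc.\ cit.}
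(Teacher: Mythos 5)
Your overall strategy is the same as the paper's at the top level: cite the known inclusion $\mathcal{A}^+\subseteq\lim_{\varepsilon\to0}\mathcal{A}_{\mu(\varepsilon)}$ (which is Proposition~5 of \cite{RZ}, valid in all classical types, so there is no need to re-derive it by matching $\varepsilon$-coefficients against Bethe generators as you propose), and then close the gap by comparing Poincar\'e series, which reduces to showing that an explicit generating set of $\mathcal{A}^+$ is algebraically independent, which in turn reduces to algebraic independence of the principal symbols in $S(\mathfrak{o}_{2n})$. Up to this point you are in agreement with the paper, just doing some unnecessary extra work on the inclusion.

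The genuine gap is in the algebraic independence step, which is the actual content of the paper's proof and where all the type-D-specific difficulty lives. You write only that one should ``verify the algebraic independence of the generators of $\mathcal{A}^+$ level by level along the Olshanski chain,'' but you give no mechanism for doing so. The paper's method is to evaluate the differentials of the symbols $x_{km}=\mathrm{gr}\,\mathrm{Tr}(\mathcal{F}^{(k)})^{2m}$, $y_{km}=\mathrm{gr}\,[(\mathcal{F}^{(k)})^{2m-1}]_{kk}$, and the Pfaffians $p_k$ at a fixed regular nilpotent element $\tilde e\in\mathfrak{o}_{2n}$ and to show these differentials are linearly independent by exhibiting a triangular structure with respect to an explicit total order on the $dF_{ij}$ (Lemma~\ref{differentials_form}). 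Two things make this nontrivial in type D and are absent from your sketch. First, the differential of the Pfaffian at $\tilde e$ requires a separate computation (Lemma~\ref{pfaff_lemma}): after the skew-symmetrizing change of variables one shows $d_{\tilde e'}\mathsf{pf}\,M\sim dM_{n,1}-dM_{n,-1}$, and it is precisely the sign that makes $dp_n$ independent of $dx_{n,n/2}$ or $dy_{n,(n+1)/2}$, which produce $dF_{n,-1}+dF_{n,1}$ instead. Second, in type D the regular nilpotent element is \emph{not} a single maximal Jordan block (unlike types B and C), so the element $\tilde e$ in \eqref{principal_nilp} has an extra ``split'' term and the monomial-by-monomial analysis of which products of $F_{ij}$ contribute to each $dF_{nj}$ is genuinely different from \cite{RZ}. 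Your proposal also misplaces where the Pfaffian enters: it is not something to be ``recovered as the leading term in $\varepsilon$ of some $\partial^k_{\mu(\varepsilon)}S_i$'' (that is handled once and for all by the cited inclusion), but rather an element of $\mathcal{A}^+$ by definition whose algebraic independence from the trace generators must be established, which is exactly what the differential computation at $\tilde e$ accomplishes.
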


The idea of the proof is as follows. In \cite{RZ} we proved the inclusion $\mathcal{A}^+\subset\lim\limits_{\varepsilon\rightarrow0}\mathcal{A}_{\mu\left(\varepsilon\right)}$ for all classical types. Hence it remains to show that the Poincar\'e series of $\mathcal{A}^+$ is greater or equal to that of $\mathcal{A}_{\mu}$ for generic $\mu$. Following the same strategy as in \emph{loc. cit.} we get a lower bound for the Poincar\'e series of $\mathcal{A}^+$ by presenting some explicit set of elements in $\mathcal{A}^+$ which are algebraically independent and have the same degrees as the generators of $\mathcal{A}_{\mu}$. For the algebraic independence it is sufficient that the images of these elements in the associated graded algebra $\text{gr}\ \mathcal{A}^+\subset S(\mathfrak{g}_n)=\mathbb{C}[\mathfrak{g}_n]$ are algebraically independent. The latter is checked by taking the differentials of these elements at a regular nilpotent point $\tilde{e}\in \mathfrak{g}_n$ and proving that these differentials are linearly independent. The two main differences from the analogous computation in the types B and C are, first, that here we have to consider Pfaffians of the successively embedded $\mathfrak{o}_{2k}$ and compute their differentials, and, second, that contrary to the B and C cases, here the regular nilpotent element is \emph{not} a maximal Jordan block, so the computation is different. This computation occupies the most part of the next section. 

\subsection{\texorpdfstring{$Y^+(2)$}{Lg}-module structure on multiplicty spaces and the eigenbasis of \texorpdfstring{$\mathcal{A}^+$}{Lg}.}

According to the Olshansky centralizer construction there is an irreducible highest weight $\textrm{Y}^+(2)$-module structure on the multiplicity spaces of the restriction of any irreducible highest weight finite-dimensional $\mathfrak{o}_{2n}$-module $V_\lambda$ to $\mathfrak{o}_{2n-2}$. Let us recall the description of this $\textrm{Y}^+(2)$-module structure following \cite[Theorem 16 (ii)]{M}.

Let $L\left(\alpha,\beta\right)$ be the irreducible $\mathfrak{gl}_2$-module with the highest weight $(\alpha,\beta)$. Then we have a $Y(2)$-module structure on
\begin{equation}
L:=L(\alpha_1,\beta_1)\otimes L(\alpha_2,\beta_2)\otimes...\otimes L(\alpha_n,\beta_n)
\end{equation}
defined via the evaluation homomorphism $Y(2)\rightarrow U(\mathfrak{gl}_2)$ and the comultiplication on $Y(2)$. We also need the $1$-dimensional $Y^+(2)$-module $W(\delta)$ ($\delta\in\mathbb{C}$) defined as 
\begin{equation}\label{1_dim_repr_+}
s_{11}(u)\mapsto\frac{u+\delta}{u+1/2},\;\;\;s_{-1,-1}(u)\mapsto\frac{u-\delta+1}{u+1/2}w,
\end{equation}
and $s_{1,-1}(u)\mapsto0,\ s_{-1,1}(u)\mapsto0$, see \cite{M} for more details. Since $Y^+(2)$ is a coideal of the Hopf algebra $Y(2)$ (\cite{MNO}) the following module can be regarded as a $Y^+(2)$-module:
\begin{equation}
L'(\delta):=L\otimes W(\delta).
\end{equation}

\begin{prop} \label{mult_space_mod_struct}
Let $\lambda=\left(\lambda_1,...,\lambda_n\right)$ and $\mu=\left(\mu_1,...,\mu_{n-1}\right)$ be highest weights of finite-dimensional irreducible representations of $\mathfrak{o}_{2n}$ and $\mathfrak{o}_{2n-2}$ and $V_{\lambda}^{\mu}$ denote the corresponding multiplicity space. Then the action of $Y^+(2)$ on $V_{\lambda}^{\mu}$ defined as a composition of homomorphism $\varphi_n$ to $U\left(\mathfrak{o}_{2n}\right)^{\mathfrak{o}_{2n-2}}$ and a natural projection is irreducible and isomorphic to
\begin{equation}
L(\alpha_1,\beta_1)\otimes...\otimes L(\alpha_{n-1},\beta_{n-1})\otimes W(-\alpha_0),
\end{equation}
where $\alpha_1=\min\{-|\lambda_1|,-|\mu_1|\}-1/2$, $\alpha_0=\alpha_1+|\lambda_1+\mu_1|$,
$$\alpha_i=\min\{\lambda_i,\mu_i\}-i+1/2,\;\;i=2,...,n-1,$$
$$\beta_i=\max\{\lambda_{i+1},\mu_{i+1}\}-i+1/2,\;\;i=1,...,n-1.$$
\end{prop}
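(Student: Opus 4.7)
The plan is to derive Proposition~\ref{mult_space_mod_struct} from the general classification of irreducible finite-dimensional $Y^+(2)$-modules recalled in \cite[Theorem~16(ii)]{M}: every such module is, up to a twist by a one-dimensional module $W(\delta)$, a tensor product of two-dimensional evaluation modules $L(\alpha_i,\beta_i)$. It therefore remains to verify that the action of $Y^+(2)$ on $V_\lambda^\mu$ obtained by composing $\varphi_n$ with the natural projection is of this form, and to pin down the parameters.

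Irreducibility and the tensor-product shape follow from the Olshanski centralizer construction in the standard way. The multiplicity space $V_\lambda^\mu$ is irreducible as a module over the centralizer $U(\mathfrak{o}_{2n})^{\mathfrak{o}_{2n-2}}$ by the double centralizer theorem, and by \cite{MO} the image of $\varphi_n$ together with the center of $U(\mathfrak{o}_{2n-2})$ generates this centralizer. Since the center acts by scalars on $V_\mu$, the $Y^+(2)$-action on $V_\lambda^\mu$ is already irreducible, and its finite dimension forces the module to have the shape given in \cite[Theorem~16(ii)]{M}.

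To identify the parameters I would compute the highest weight of $V_\lambda^\mu$ as a $Y^+(2)$-module explicitly. Choose a vector $v\in V_\lambda^\mu$ represented by an $\mathfrak{n}^+_{2n-2}$-singular vector of weight $\mu$ in $V_\lambda$ that is additionally annihilated by the generators $s_{-1,1}^{(r)}$; such a $v$ is unique up to scalar by the irreducibility argument above. The action of $s_{11}(u)$ and $s_{-1,-1}(u)$ on $v$ is read off from the explicit formula for $\varphi_n$ coming from the Olshanski construction, and produces rational functions in $u$ whose zeros and poles are determined by the Cartan eigenvalues carried by $\lambda$ and $\mu$. Matching these term by term with the highest weight series of $L(\alpha_1,\beta_1)\otimes\ldots\otimes L(\alpha_{n-1},\beta_{n-1})\otimes W(\delta)$ and reading off the roots and the twist yields the stated formulas for $\alpha_i$, $\beta_i$, and $\delta=-\alpha_0$.

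The main subtlety compared to the analogous propositions in types B and C from \cite{RZ} lies in the type D convention: the coordinate $\lambda_1$ (in the ordering used here, the one that can be negative, reflecting the half-spin phenomenon) contributes only its absolute value to the location of the poles and zeros of $s_{11}(u)\cdot v$, while its sign is detected only through the combined quantity $|\lambda_1+\mu_1|$. This is exactly what gets absorbed into the twist parameter $\delta$ and what forces the absolute values to appear in $\alpha_1=\min\{-|\lambda_1|,-|\mu_1|\}-1/2$ and $\alpha_0=\alpha_1+|\lambda_1+\mu_1|$. Correctly bookkeeping this sign ambiguity, and in particular verifying that the nonvanishing multiplicity spaces correspond to the generic (non-resonant) parameter configurations of \cite{M}, is the main obstacle of the otherwise routine highest weight computation.
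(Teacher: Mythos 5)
The paper gives no proof of this Proposition; it is presented as a direct recollection of \cite[Theorem~16~(ii)]{M}, with the surrounding text simply building on it. So there is nothing internal to compare your argument against. Your sketch amounts to a plan for re-deriving Molev's result from scratch: irreducibility of the $Y^+(2)$-action on $V_\lambda^\mu$ from the Olshanski centralizer construction and the double centralizer theorem, the ``tensor product of evaluation modules twisted by $W(\delta)$'' shape from the classification of finite-dimensional irreducible $Y^+(2)$-modules, and parameter identification by an explicit highest weight computation. This strategy is sound and presumably mirrors what \cite{M} itself does, and your observation that in type D the sign of $\lambda_1$ enters only through $|\lambda_1|$ in the roots and through $|\lambda_1+\mu_1|$ in the twist is exactly the bookkeeping that distinguishes this case from types B and C.

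Two issues nonetheless. First, a citation concern: you invoke \cite[Theorem~16(ii)]{M} as the general classification theorem, but the paper cites precisely that reference as the statement you are trying to prove; if Theorem~16(ii) is indeed the multiplicity-space description rather than the abstract classification, your argument is circular and you would need to cite a separate classification result (elsewhere in \cite{M}, or in \cite{MNO}) for that step. Second, your parameter-identification step is stated as an intention rather than carried out: the actual matching of the action of $s_{11}(u)$ and $s_{-1,-1}(u)$ on the chosen $\mathfrak{o}_{2n-2}$-singular $Y^+(2)$-highest vector against the highest weight series of the proposed tensor product, together with the verification that the resulting parameters land in the non-resonant regime where that tensor product is irreducible, is the substantive content of any self-contained proof and must be done explicitly for the argument to stand.
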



Consider the multiplicity space $V^{\mu}_{\lambda}:=\textrm{Hom}_{\mathfrak{g}_{n-1}}\left(V_{\mu},V_{\lambda}\right)$ Note that the above $Y^+(2)$ module \begin{equation}
L(\alpha_1,\beta_1)\otimes...\otimes L(\alpha_{n-1},\beta_{n-1})\otimes W(-\alpha_0),
\end{equation} has a (very naive) basis formed by the tensor products of the weight vectors in each tensor factor. This means that there is a basis of the multiplicity space indexed by collections of integers $\lambda'_1,\ldots,\lambda'_n$ satisfying the following inequalities:
$$-|\lambda_{1}|\geq\lambda'_{1}\geq\lambda_{2}\geq\lambda'_{2}\geq...\geq\lambda_{n-1}\leq\lambda'_{n-1}\geq\lambda_{n},$$
$$-|\mu_1|\geq\lambda'_{1}\geq\mu_{2}\geq\lambda'_{2}\geq...\geq\mu_{n-1}\geq\lambda'_{n-1}$$

Iterating the above restriction procedure (similarly to the construction of the Gelfand-Tsetlin basis) we get a basis of irreducible $\mathfrak{o}_{2n}$-module $V_\lambda$ corresponding to the combinatorial objects called \textit{D type pattern} $\Lambda$ for $\mathfrak{g}_n=\mathfrak{o}_{2n}$
$$\begin{matrix}
\lambda_{n1} && \lambda_{n2} && \ldots && \lambda_{nn}\\
& \lambda'_{n1} && \ldots  && \lambda'_{n-1,n-1}\\
\lambda_{n-1,1} && \ldots && \lambda_{n-1,n-1}\\
& \ldots && \ldots && \\
\lambda_{21} && \lambda_{22}\\
& \lambda'_{11}\\
\lambda_{11}
\end{matrix}$$
with $\lambda=(\lambda_{n1},...,\lambda_{nn})$ and all entries are simultaneously from $\mathbb{Z}_{\leq0}$ or $\{m+\frac{1}{2}|m\in\mathbb{Z},m+\frac{1}{2}\leq0\}$, satisfying the following set of inequalities
$$-|\lambda_{k1}|\geq\lambda'_{k-1,1}\geq\lambda_{k2}\geq\lambda'_{k-1,2}\geq...\geq\lambda_{k,k-1}\leq\lambda'_{k-1,k-1}\geq\lambda_{kk},$$
$$-|\lambda_{k-1,1}|\geq\lambda'_{k-1,1}\geq\lambda_{k-1,2}\geq\lambda'_{k-1,2}\geq...\geq\lambda_{k-1,k-1}\geq\lambda'_{k-1,k-1}$$
when $k=2,...,n$.

By \cite{KR} the spectrum of limit algebra $\mathcal{A}^+=\lim\limits_{\varepsilon\to0}\mathcal{A}_{\mu(\varepsilon)}$ in any irreducible finite-dimensional $\mathfrak{g}$-module $V_\lambda$ is simple. Moreover, from Theorem $\ref{theorem_a}$ we know that the limit algebra $\mathcal{A}^+$ is generated by commutative subalgebras $\mathcal{A}_k=\varphi_k(\mathcal{B}^+)$ in the successive centralizer algebras $U(\mathfrak{g}_k)^{\mathfrak{g}_{k-1}}$. Hence any eigenvector $v$ with respect to $\mathcal{A}^+$ in $V_\lambda$ has the following form: for some collection of highest weights $\lambda_k$ of $\mathfrak{g}_k$ (with $\lambda_n=\lambda$) have $v=\bigotimes\limits_{k=1}^nu_k$ where $u_k$ is an eigenvector of $\mathcal{B}^+$ in the multiplicity space $V^{\lambda_{k-1}}_{\lambda_k}=\textrm{Hom}_{\mathfrak{g}_{k-1}}\left(V_{\lambda_{k-1}},V_{\lambda_{k}}\right)$. Our purpose is to give a natural indexing of the (much more complicated) eigenbasis of $V_\lambda$ with respect to $\mathcal{A}^+$ by the same combinatorial data.

By Proposition~\ref{mult_space_mod_struct} we can regard the eigenbasis for $\mathcal{A}^+$ in $V_\lambda$ as an element of the continuous family of eigenbases for $\bigotimes\limits_{k=1}^n\mathcal{B}^+$ in the tensor products $\bigotimes\limits_{k=1}^n (\bigotimes\limits_{j=1}^{k-1}L(\alpha_{kj},\beta_{kj}))\otimes W(-\alpha_{k0})$ with $\alpha_{kj}$ being free parameters and the differences $\alpha_{kj}-\beta_{kj}$ being fixed integers. Then the following analog of \cite[Theorem B]{RZ} holds in the type $D$ case (without any changes in the proof). 

\begin{predl}
There is a path $\alpha(t)$ ($t\in[0,\infty)$) in the space of parameters $\alpha_{kj}$ such that \begin{itemize} \item $\alpha(0)$ is the collection of $\alpha_{kj}$ which occurs in our $\mathfrak{g}_n$-module $V_\lambda$;
\item the spectrum of $\bigotimes\limits_{k=1}^n\mathcal{B}^{+}$ on $\bigoplus\bigotimes\limits_{k=1}^n(\bigotimes\limits_{j=1}^{k-1}L(\alpha_{kj}(t),\beta_{kj}(t)))\otimes W(-\alpha_{k0})$ is simple for all $t>0$;
\item the limit of the corresponding eigenbasis as $t\to+\infty$ is just the product of the $\mathfrak{sl}_2$-weight bases in each $L(\alpha_{kj},\beta_{kj})=V_{\alpha_{kj}-\beta_{kj}}$.
\end{itemize}
\end{predl}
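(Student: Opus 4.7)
The plan is to transfer the proof of \cite[Theorem B]{RZ} essentially verbatim to type D, which is legitimate because the only structural inputs — the coideal property of $Y^+(2)$ inside $Y(2)$ and the $Y^+(2)$-module structure on multiplicity spaces given by Proposition \ref{mult_space_mod_struct} — are formally the same in types B and D. The strategy is: produce a one-parameter family $\alpha(t)$ along which simplicity of the joint spectrum of $\bigotimes_{k=1}^n \mathcal{B}^+$ is preserved, and such that the Bethe subalgebra degenerates at $t\to\infty$ to the Cartan of each $\mathfrak{gl}_2$-factor acting diagonally on the weight basis of each evaluation module.

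Concretely, I would fix the integer differences $\alpha_{kj}-\beta_{kj}$ and choose $\alpha_{kj}(t)=\alpha_{kj}(0)+c_{kj}\,t$ for a generic vector of shifts $c_{kj}\in\mathbb{R}$ whose coordinates are large and rationally independent. Under this scaling the evaluation parameters of distinct tensor factors $L(\alpha_{kj}(t),\beta_{kj}(t))$ diverge at distinct rates, so the matrix coefficients of $s_{11}(u)$ acting on $\bigotimes_{j=1}^{k-1}L(\alpha_{kj}(t),\beta_{kj}(t))\otimes W(-\alpha_{k0}(t))$ become rational functions of $t$ whose leading terms are supported on the Cartan-diagonal part of the action. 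The off-diagonal series $s_{1,-1}(u)$ and $s_{-1,1}(u)$ contribute only subleading terms in $t$, so the leading coefficient of each generator of $\mathcal{B}^+$ reduces at $t=\infty$ to a polynomial in the weight operators of the individual tensor factors. This identifies the limit eigenbasis with the tensor product of $\mathfrak{sl}_2$-weight bases of each $L(\alpha_{kj},\beta_{kj})\simeq V_{\alpha_{kj}-\beta_{kj}}$, which is the third bullet of the theorem.

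For the simplicity statement on $(0,\infty)$ I would argue by openness: the failure locus of simplicity is the zero set of a discriminant which is a polynomial in $t$, hence either finite or identically zero. By \cite{KR} combined with Theorem \ref{theorem_a} the spectrum is simple at $t=0$, and the asymptotic computation of the previous paragraph forces simplicity in a neighborhood of $t=+\infty$ after the generic choice of $c_{kj}$ (distinct weights of the Cartan combine to give distinct joint eigenvalues on a generic vector). Therefore the discriminant is not identically zero, and its finitely many real roots in $(0,\infty)$ can be avoided by a further small perturbation of the direction $(c_{kj})$ while preserving both endpoints of the path.

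The main obstacle is precisely this second step, the verification that a single path can be chosen which realizes simplicity of the spectrum for all $t\in(0,\infty)$ simultaneously. In \cite{RZ} this was the technical heart of Theorem B in types B and C, and the argument there is based only on the openness of simplicity and on the existence of a limiting point at $t=+\infty$ where simplicity can be checked directly; both ingredients survive unchanged in type D because the $W(-\alpha_{k0})$ factor contributes only a scalar twist and does not affect the weight decomposition of the $\mathfrak{gl}_2$-tensor factors. Once simplicity along the path is established, the continuity of the eigenbasis and the explicit limit identification together yield the theorem.
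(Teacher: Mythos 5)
Your proposal follows essentially the same route as the paper's proof: take a straight-line path in the evaluation parameters, rescale the generators of $\mathcal{B}^+$ by suitable powers of $t$ so that the $t\to\infty$ limit diagonalizes on the product of $\mathfrak{gl}_2$-weight bases, and then argue simplicity along the ray by a discriminant argument together with simplicity at the two ends. The paper's version (imported from \cite[Theorem~B]{RZ}) takes $z_i = tu_i$ with fixed reals $u_1<\cdots<u_n$, replaces $s_{11}^{(2m+1)}$ by $t^{-2m}s_{11}^{(2m+1)}$, cites \cite[Corollary~1]{RZ} for the limit, and cites \cite{KR} for simplicity at $t=0$; the observation that the coideal structure and the $W(-\alpha_{k0})$ factor behave in type~$D$ exactly as in types~$B,C$ is precisely what makes the transfer immediate, as you say.

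One small caveat: the final step "its finitely many real roots in $(0,\infty)$ can be avoided by a further small perturbation of the direction $(c_{kj})$ while preserving both endpoints of the path" is not well-posed as stated. Perturbing the direction of a linear ray does not preserve the point at infinity, and more importantly it can move old roots and create new ones — this is not how one clears the discriminant locus from a ray. The cleaner version (and the one the paper's Discussion section gestures at) is to observe that the failure locus of simple spectrum is contained in a complex hypersurface in $\mathbb{C}^n$, hence has real codimension $2$; a generic small perturbation of the \emph{path} (not of its direction, and allowing complex values of the parameters near the finitely many bad points) therefore avoids it while keeping $\alpha(0)$ fixed and keeping the $t\to\infty$ asymptotics unchanged. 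Since the theorem only asserts \emph{existence} of such a path this codimension-$1$ argument suffices; the finer question of whether the resulting indexing is independent of the path is exactly what requires the codimension-$2$ statement discussed at the end of the paper. Apart from this detail your reconstruction is correct and matches the intended argument.
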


We reproduce the construction of such path $\alpha(t)$ in the end of the next section. Once we have such $\alpha(t)$ we can take the parallel transport of the eigenbasis at $t=0$ to the product of the weight bases at $t=\infty$ thus obtain a bijection between the former and the latter. Note that the product of the weight bases of $L(\alpha_{kj},\beta_{kj})$ is naturally enumerated by the GT-patterns of the type D. So from the above Theorem we get the following 

\begin{cor}
The spectrum of $\mathcal{A}^+$ in any finite-dimensional irreducible $\mathfrak{o}_{2n}$-module $V_{\lambda}$ of highest weight $\lambda$ can be naturally parametrized by the set of Gelfand-Tsetlin type D patterns.
\end{cor}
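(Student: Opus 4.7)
The plan is to deduce the corollary directly from the preceding theorem by parallel transport along the path $\alpha(t)$. Since the theorem guarantees that the spectrum of $\bigotimes_{k=1}^n \mathcal{B}^+$ on $\bigoplus\bigotimes_{k=1}^n(\bigotimes_{j=1}^{k-1}L(\alpha_{kj}(t),\beta_{kj}(t)))\otimes W(-\alpha_{k0})$ is simple for all $t>0$, the joint eigenlines form a flat family of lines over $(0,\infty)$ in the projectivization of the ambient space. First, I would extend this family continuously to $t=0$ (where the family specializes to an eigenbasis of $\mathcal{A}^+$ on $V_\lambda$ by Theorem~\ref{theorem_a}, simplicity at $t=0$ being guaranteed by the cited result of \cite{KR}) and to $t=\infty$ (where by the third bullet of the theorem the limit eigenbasis is the product of $\mathfrak{sl}_2$-weight bases in each $L(\alpha_{kj},\beta_{kj})$). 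The parallel transport from $t=0$ to $t=+\infty$ then furnishes a canonical bijection between the eigenbasis of $\mathcal{A}^+$ in $V_\lambda$ and the product weight basis at infinity.

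Second, I would identify the indexing set of the limit basis with the set of type D Gelfand-Tsetlin patterns. An $\mathfrak{sl}_2$-weight vector in $L(\alpha_{kj},\beta_{kj})$ is specified by an integer $\lambda'_{kj}$ satisfying $\beta_{kj}\le\lambda'_{kj}\le\alpha_{kj}$ (or, for the factor $W(-\alpha_{k0})$, by a uniquely determined value), and a vector in the whole tensor product over all levels $k$ corresponds to a collection $(\lambda'_{kj})$. Plugging in the explicit formulas for $\alpha_{kj},\beta_{kj}$ from Proposition~\ref{mult_space_mod_struct} at successive restrictions $\mathfrak{o}_{2k}\supset\mathfrak{o}_{2k-2}$ with highest weights $\lambda_k=(\lambda_{k1},\ldots,\lambda_{kk})$ and $\lambda_{k-1}=(\lambda_{k-1,1},\ldots,\lambda_{k-1,k-1})$, the weight inequalities become exactly
\begin{equation*}
-|\lambda_{k1}|\ge\lambda'_{k-1,1}\ge\lambda_{k2}\ge\lambda'_{k-1,2}\ge\ldots\ge\lambda_{k,k-1}\le\lambda'_{k-1,k-1}\ge\lambda_{kk},
\end{equation*}
\begin{equation*}
-|\lambda_{k-1,1}|\ge\lambda'_{k-1,1}\ge\lambda_{k-1,2}\ge\lambda'_{k-1,2}\ge\ldots\ge\lambda_{k-1,k-1}\ge\lambda'_{k-1,k-1},
\end{equation*}
which are the defining inequalities of a type D pattern $\Lambda$ with top row $\lambda$.

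Finally, iterating over $k=n,n-1,\ldots,2$ and recording all the $\lambda_{ki}$ and $\lambda'_{k-1,j}$ produces precisely the combinatorial datum of a type D GT pattern, and the bijection constructed above translates it into a labelling of the $\mathcal{A}^+$-eigenbasis of $V_\lambda$. The only non-formal ingredient is the combinatorial matching between the $\mathfrak{sl}_2$-weight constraints in the tensor factors $L(\alpha_{kj},\beta_{kj})$ and the pattern inequalities; I expect this to be a short verification given the explicit formulas for $\alpha_{kj},\beta_{kj}$ in Proposition~\ref{mult_space_mod_struct}, so no serious obstacle arises once the theorem is in hand.
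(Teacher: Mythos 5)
Your proposal follows the same route as the paper: parallel transport of the simple-spectrum eigenbasis along $\alpha(t)$ from $t=0$ (where it is the $\mathcal{A}^+$-eigenbasis, simple by \cite{KR}) to $t=\infty$ (where it degenerates to the product of $\mathfrak{sl}_2$-weight bases), and then reading off the interlacing inequalities from the explicit formulas for $\alpha_{kj},\beta_{kj}$ in Proposition~\ref{mult_space_mod_struct}. This matches the paper's argument in the subsection on indexing the spectrum, so no further comparison is needed.
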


So the results of \cite{RZ} together with the present note can be summarized as follows:

\begin{predl}
Let $\mathfrak{g}_n$ be a Lie algebra of any classical type ($B_n$, $C_n$ or $D_n$). The limit of the shift of argument subalgebra $\lim\limits_{\varepsilon\rightarrow0}\mathcal{A}_{\mu\left(\varepsilon\right)}$ for $\mu\left(\varepsilon\right)=F_{nn}+F_{n-1,n-1}\varepsilon+...+F_{11}\varepsilon^{n-1}$ in the universal enveloping algebra $U\left(\mathfrak{g}_{n}\right)$ is $\mathcal{A}^\mp$. The spectrum of this subalgebra in any irreducible $\mathfrak{g}_n$-module $V_\lambda$ is simple and can be naturally identified with the set of Gelfand-Tsetlin patterns of the corresponding type. 
\end{predl}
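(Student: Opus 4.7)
The plan is to treat this as an assembly statement that unifies the results of \cite{RZ} with the present note, splitting the argument by Cartan type. For $\mathfrak{g}_n = \mathfrak{sp}_{2n}$ (type $C_n$) and $\mathfrak{g}_n = \mathfrak{o}_{2n+1}$ (type $B_n$), the identifications $\lim_{\varepsilon\to0}\mathcal{A}_{\mu(\varepsilon)} = \mathcal{A}^-$ and $\lim_{\varepsilon\to0}\mathcal{A}_{\mu(\varepsilon)} = \mathcal{A}^+$ are exactly Theorem~A of \cite{RZ}, and the natural indexing of the eigenbasis by type $C$, resp.\ type $B$, Gelfand--Tsetlin patterns is Theorem~B of \emph{loc.\ cit}. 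So for these two types the statement is a direct quotation.

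For $\mathfrak{g}_n = \mathfrak{o}_{2n}$ (type $D_n$), the algebra identification $\lim_{\varepsilon\to0}\mathcal{A}_{\mu(\varepsilon)} = \mathcal{A}^+$ is supplied by Theorem~\ref{theorem_a} of the present note. The parametrization of the eigenbasis by type $D$ patterns is then obtained via the chain Proposition~\ref{mult_space_mod_struct} $\Rightarrow$ the path Theorem stated above $\Rightarrow$ its Corollary: Proposition~\ref{mult_space_mod_struct} identifies the $Y^+(2)$-action on each multiplicity space $V_\lambda^\mu$ with a tensor product of the shape $L(\alpha_1,\beta_1)\otimes\cdots\otimes L(\alpha_{n-1},\beta_{n-1})\otimes W(-\alpha_0)$, the path Theorem transports the Bethe-subalgebra eigenbasis at $t=0$ to the product of $\mathfrak{sl}_2$-weight bases at $t=\infty$, and iteration through the chain $\mathfrak{o}_{2n}\supset\mathfrak{o}_{2n-2}\supset\cdots\supset\mathfrak{o}_2$ glues the resulting indices into precisely a type $D$ pattern as displayed in the excerpt.

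Simplicity of the spectrum is uniform across the three types: the limit $\lim_{\varepsilon\to0}\mathcal{A}_{\mu(\varepsilon)}$ is obtained from a one-parameter family of shift of argument subalgebras whose spectra in $V_\lambda$ are generically simple, and by the main result of \cite{KR} simplicity is preserved under taking limits along a generic curve of regular Cartan elements approaching the singular parameter we use here. In each of the three cases the family $\mu(\varepsilon)$ is exactly of the form covered by \cite{KR}, so the spectrum of $\mathcal{A}^\mp$ in $V_\lambda$ is simple.

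The hard part has already been done upstream: for types $B$ and $C$ the nontrivial step was the differential computation at a regular nilpotent element (Jordan block) carried out in \cite{RZ}, and for type $D$ the analogous differential computation, now involving Pfaffians of successively embedded $\mathfrak{o}_{2k}$ and a regular nilpotent that is \emph{not} a single Jordan block, is the content of Theorem~\ref{theorem_a}. Granting these inputs together with the path argument and \cite{KR}, the summary theorem is essentially a tabulation; no further obstacle arises beyond matching the combinatorial labels of the weight bases in each tensor factor to the inequalities defining the appropriate GT pattern, which is immediate from the formulas for $\alpha_i,\beta_i$ recorded before and within Proposition~\ref{mult_space_mod_struct}.
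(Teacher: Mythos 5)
Your proposal is correct and matches the paper's intent exactly: the theorem is a summary statement that assembles Theorems~A and~B of \cite{RZ} for types $B$ and $C$ with Theorem~\ref{theorem_a}, Proposition~\ref{mult_space_mod_struct}, the path theorem, and its corollary for type $D$, and invokes \cite{KR} for simplicity of the spectrum; the paper itself offers no separate argument beyond this tabulation. The only slight imprecision is the phrasing that simplicity is ``preserved under taking limits along a generic curve'' --- more accurately, \cite{KR} proves simple spectrum directly for \emph{all} limit shift of argument subalgebras at boundary points of the compactified parameter space, so the specific (non-generic) curve $\mu(\varepsilon)$ is covered because its limit lands in that compactification --- but this does not affect the validity of the argument.
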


The identification of the eigenbasis with the set of Gelfand-Tsetlin patterns depends on the choice of the path $\alpha(t)$. In the last section of this note we present some arguments towards this identification being independent of this choice. The detailed proof based on these arguments is the subject of our future work. We also expect that this indexing of the eigenbasis agrees with the $\mathfrak{g}_n$-crystal structure on the eigenbasis and on GT-patterns of classical types, defined in \cite{KR} and \cite{L}, respectively (see \cite[Conjecture~2]{RZ}). 

\subsection{Acknowledgements.} The work of both authors has been funded by the Russian Academic Excellence Project ’5-100’. The first
author has also been supported in part by the Simons Foundation.



\section{Proofs of the main theorems.}
\subsection{An auxiliary lemma on the Pfaffian polynomial.}

Recall that the determinant of a $2n\times2n$ skew-symmetric matrix $M$ is the square of a polynomial called the Pfaffian of $M$. This polynomial can be expressed in terms of matrix entries $M_{ij}$ as follows:
\begin{equation} \label{pfaffian_formula}
\textsf{pf}\left(M\right)=\frac{1}{2^nn!}\sum_{\sigma\in S_{2n}}m\left(\sigma\right),\\
\end{equation}
where $m\left(\sigma\right)=\textrm{sgn}\left(\sigma\right)M_{\sigma\left(1\right),\sigma\left(2\right)}\cdot...\cdot M_{\sigma\left(2n-1\right),\sigma\left(2n\right)}$. We identify $\{1,...,2n\}$ with $\{-n,...,-1,1,...,n\}$ by the unique order-preserving bijection and treat $S_{2n}$ as the set of all bijective maps from $\{1,...,2n\}$ to $\{-n,...,-1,1,...,n\}$.

Introduce the following skew-symmetric element:
\begin{equation}\label{skew-symm element}
\tilde{e}'=\sum_{i=1}^{n-1}\left(1\otimes M_{i,-i-1}+\left(-1\right)\otimes M_{-i-1,i}\right)+\left(1\otimes M_{-1,-2}+\left(-1\right)\otimes M_{-2,-1}\right).
\end{equation}

We have the following auxiliary result. 

\begin{lemma} \label{pfaff_lemma}
The differential of $\textsf{pf}(M)$ at $\tilde{e}'$ is proportional to $dM_{n,1}-dM_{n,-1}$ with a non-zero factor:
$$d_{\tilde{e}'}\textsf{pf}\;M\sim dM_{n,1}-dM_{n,-1}.$$
\end{lemma}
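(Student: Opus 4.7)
The plan is to expand the Pfaffian via the perfect-matching formula,
\[
\textsf{pf}(M) = \sum_P \textrm{sgn}(P) \prod_{\{i,j\} \in P,\ i<j} M_{ij},
\]
so that $d_{\tilde{e}'} \textsf{pf} = \sum_{i<j} c_{ij}\,dM_{ij}$ with $c_{ij}$ equal to the sum, over perfect matchings $P$ of $\{-n,\dots,-1,1,\dots,n\}$ containing the edge $\{i,j\}$, of $\textrm{sgn}(P)$ times the product of the $\tilde{e}'$-entries along the remaining edges of $P$. A matching contributes nontrivially only when all of its other edges lie in the support of $\tilde{e}'$, so the first step is to tabulate this support: the nonzero edges of $\tilde{e}'$ are $\{i,-i-1\}$ for $i=1,\dots,n-1$, together with the single edge $\{-1,-2\}$, each carrying entry $\pm 1$.

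The heart of the argument is a combinatorial analysis of this support graph. Vertex $n$ is isolated, vertex $-2$ has degree two (with neighbors $1$ and $-1$), and every other vertex has degree exactly one. Hence for $c_{ij}$ to be nonzero the query edge $\{i,j\}$ must (a) cover the isolated vertex $n$, forcing $n\in\{i,j\}$, and (b) cover exactly one of the two neighbors of $-2$; otherwise the two support-edges at $-2$ would both be forced simultaneously into the matching, which is impossible. This leaves only the two possibilities $\{i,j\}=\{-1,n\}$ and $\{i,j\}=\{1,n\}$, and in each case the complementary matching is uniquely determined.

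It remains to compare the two surviving contributions. The two matchings differ only by exchanging the arc pair $\{-2,1\},\{-1,n\}$ (which cross) for $\{-2,-1\},\{1,n\}$ (which are disjoint), the other $n-2$ arcs being common to both. By the arc-crossing description of the Pfaffian sign this swap changes the total number of crossings by exactly one, so the two matching signs are opposite; meanwhile the products of the remaining $n-1$ entries are both $(-1)^{n-1}$. Therefore $c_{-1,n} = -c_{1,n} \neq 0$, and applying the skew-symmetry relation $dM_{ji} = -dM_{ij}$ converts the resulting $c_{-1,n}(dM_{-1,n}-dM_{1,n})$ into $c_{-1,n}(dM_{n,1}-dM_{n,-1})$, yielding the claim. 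The main conceptual point is the observation about the degree-two vertex $-2$ that pins down the two surviving matchings; the ensuing sign bookkeeping is then routine.
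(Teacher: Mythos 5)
Your proof is correct, and it takes a genuinely different route from the paper's. The paper works directly with the overcounted formula $\frac{1}{2^n n!}\sum_{\sigma\in S_{2n}}m(\sigma)$, observes that all $\sigma$'s sharing the same pair-partition $P(\sigma)$ contribute equally (invariance under ordering within pairs and under permuting pairs), and then compares the two surviving classes of terms by exhibiting a sign-reversing bijection: multiplication by the transposition of indices $1$ and $-1$. You instead use the perfect-matching formula, reduce the problem to a short graph-theoretic analysis of the support of $\tilde{e}'$ --- the observation that vertex $n$ is isolated while vertex $-2$ has degree two immediately forces the two surviving matchings --- and compare their signs via arc crossings. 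The structural explanation of \emph{why} only two matchings survive (the degree sequence of the support graph) is a genuine addition; the paper asserts the corresponding fact about $P(\sigma)$ without comment.

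One step worth tightening: the claim that swapping the arc pair $\{-2,1\},\{-1,n\}$ for $\{-2,-1\},\{1,n\}$ ``changes the total number of crossings by exactly one'' is not automatic, since rematching four points can also alter the crossing count with the other $n-2$ arcs. You should either note that for each external arc $\{-k,k-1\}$ (with $3\le k\le n$) the four vertices $\{-2,-1,1,n\}$ sit with exactly three in its interior and one outside, so that each of the two configurations crosses it exactly once; or invoke the general fact that the parity of external crossings is invariant under rematching a fixed $4$-element vertex set. Either remark closes the gap, and the rest of the bookkeeping (the common factor $(-1)^{n-1}$, the conversion $dM_{-1,n}-dM_{1,n}=dM_{n,1}-dM_{n,-1}$) is correct.
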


\begin{proof} For $\sigma\in S_{2n}$ we define a set $P(\sigma)$ of unordered pairs as:
\begin{equation}
P(\sigma):=\{\left(\sigma\left(2i-1\right),\sigma\left(2i\right)\right)|i=1,...,n\}.
\end{equation}

Note that $d_{\tilde{e}}m(\sigma)\neq 0$ iff $(i,-i-1)\in P(\sigma)$ for all $i=2,...,n-1$ and either $(n,1),(-1,-2)\in P(\sigma)$ or $(n,-1),(1,-2)\in P(\sigma)$. Hence we just need to examine the contribution to $d_{\tilde{e}'}\textsf{pf}M$ of all $m(\sigma)$ corresponding to $\sigma\in S_{2n}$ satisfying the conditions above.

Assume $(n,1)\in P(\sigma)$ and $\sigma$ satisfies the conditions above. Then we have $d_{\tilde{e}'}m(\sigma)\sim dM_{n,1}$. To describe such $\sigma$'s we need to fix the order in each pair of indices $(\sigma(2i-1),\sigma(2i))$ (note that the choice of the order does not change the value of $d_{\tilde{e}'}m(\sigma)$ since changing the order changes both the parity of the permutation $\sigma$ and the sign of $M_{\sigma(2i-1),\sigma(2i)}$). There is also an action of $S_n$ on $P(\sigma)$ (simply permuting the pairs) and this action preserves the property of satisfying the required condition. We observe that $d_{\tilde{e}'}m(\sigma)$ does not change if we act with a transposition on $P(\sigma)$. Indeed, both the parity of the permutation $\sigma$ and the monomial $m(\sigma)$ do not change. This shows that all the $d_{\tilde{e}'}m(\sigma)$'s with $(n,1)\in P(\sigma)$ are equal.

Multiplying by the transposition of 1 and -1 is a bijection between the sets of $\sigma$'s with $(n,1)\in P(\sigma)$ and with $(n,-1)\in P(\sigma)$. This means that each $d_{\tilde{e}'}m(\sigma)$ with $(n,-1)\in P(\sigma)$ has a contribution opposite to the one for $\sigma$'s with $(n,1)\in P(\sigma)$. This implies the assertion of the Lemma.
\end{proof}

\subsection{Algebraic independence of some elements of \texorpdfstring{$S\left(\mathfrak{o}_{2n}\right)$}{Lg}}

Introduce the principal nilpotent element of $\mathfrak{o}_{2n}$:
\begin{equation} \label{principal_nilp}
\tilde{e}=\sum_{i=1}^{n-1}\left(1\otimes F_{i,i+1}+\left(-1\right)\otimes F_{-i-1,-i}\right)+1\otimes F_{-1,2}+\left(-1\right)\otimes F_{-2,1}\in\mathfrak{o}_{2n}=\mathfrak{o}_{2n}^*.
\end{equation}

Let $x_{km}=\textrm{gr}\left(\textrm{Tr}\left(\mathcal{F}^{(k)}\right)^{2m}\right)$, $y_{km}=\textrm{gr}\left(\left[\left(\mathcal{F}^{(k)}\right)^{2m-1}\right]_{kk}\right)$, $p_k=\sqrt{\mathrm{det}\left(\left(F^{(k)}\right)^{2k}\right)}$, where
\begin{equation}
\mathcal{F}^{(k)}:=\sum\limits_{i,j=-k}^kF_{ij}\otimes E_{ij}\in U\left(\mathfrak{o}_{2n}\right)\otimes\textrm{End}\left(\mathbb{C}^{2k}\right),
\end{equation}
\begin{equation}
F^{(k)}:=\sum\limits_{i,j=-k}^kF_{ij}\otimes E_{ij}\in S\left(\mathfrak{o}_{2n}\right)\otimes\textrm{End}\left(\mathbb{C}^{2k}\right),
\end{equation}
and $k=1,...,n$; $m=1,...,k-1$.

Consider the following total order on the set $\tilde{U}=\{dF_{ij}|j=i,...,1,-1,...,-i\;\textrm{and}\;i=n,...,1\}$:
$$dF_{ij}\succ dF_{i'j'}\;\;\textrm{if}\;i>i'\;\textrm{or}\;i=i'\;\textrm{and}\;j<j'.$$

We extend this order to a partial order on the basis of $T_{\tilde e}^*\mathfrak{g}_n^*$ by simply setting each $dF_{ij}$ not from $\tilde{U}$ to be less than any element of $\tilde{U}$. Now we are ready to prove the following lemma.

\begin{lemma} \label{differentials_form}
The differentials of the elements $x_{nm}$, $y_{nm}$ ($m=1,...,n-1$) and $p_n$ at $\tilde{e}$ have the following form (here $\sim$ denotes proportionality):
$$dx_{n,n-1}|_{F=\tilde{e}}\sim dF_{n,-n+1}+(\textrm{linear combination of terms lower than}\;dF_{n,-n+1})$$
$$dy_{n,n-1}|_{F=\tilde{e}}\sim dF_{n,-n+2}+(\textrm{linear combination of terms lower than}\;dF_{n,-n+2})$$
$$dx_{n,n-2}|_{F=\tilde{e}}\sim dF_{n,-n+3}+(\textrm{linear combination of terms lower than}\;dF_{n,-n+3})$$
$$dy_{n,n-2}|_{F=\tilde{e}}\sim dF_{n,-n+4}+(\textrm{linear combination of terms lower than}\;dF_{n,-n+4})$$
$$...$$
if $n$ is even:
$$dx_{n,n/2}|_{F=\tilde{e}}\sim (dF_{n,-1}+dF_{n,1})+(\textrm{linear combination of terms lower than}\;dF_{n,1})$$
if $n$ is odd:
$$dy_{n,(n+1)/2}|_{F=\tilde{e}}\sim (dF_{n,-1}+dF_{n,1})+(\textrm{linear combination of terms lower than}\;dF_{n,1})$$
$$\;$$
$$dp_n|_{F=\tilde{e}}\sim (dF_{n,-1}-dF_{n,1})+(\textrm{linear combination of terms lower than}\;dF_{n,1})$$
$$...$$
$$dx_{n,1}|_{F=\tilde{e}}\sim dF_{n,n-1}+(\textrm{linear combination of terms lower than}\;dF_{n,n-1})$$
$$dy_{n,1}|_{F=\tilde{e}}\sim dF_{n,n}+(\textrm{linear combination of terms lower than}\;dF_{n,n})$$
\end{lemma}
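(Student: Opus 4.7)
The plan is to differentiate each of $x_{nm}$, $y_{nm}$, $p_n$ directly in the matrix picture on $\mathfrak{g}_n^*\cong\mathfrak{g}_n$. Writing $F=\sum_{ij}F_{ij}\otimes E_{ij}$ for the tautological matrix, one immediately obtains
\begin{equation*}
d_{\tilde e}\,x_{nm}=2m\sum_{i,j}[\tilde e^{\,2m-1}]_{ij}\,dF_{ji},\qquad d_{\tilde e}\,y_{nm}=\sum_{i,j}dF_{ij}\sum_{k=0}^{2m-2}[\tilde e^{\,k}]_{ni}\,[\tilde e^{\,2m-2-k}]_{jn},
\end{equation*}
so the task for $x$ and $y$ reduces to controlling the entries of the powers $\tilde e^{\,k}$. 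For $p_n$, which up to a scalar is the Pfaffian of the skew-symmetrisation $F^{(n)}P$ with $P_{ij}=\delta_{i,-j}$ the involution swapping $k\leftrightarrow -k$, I would invoke the auxiliary Lemma~\ref{pfaff_lemma}.

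The combinatorial heart of the proof is the Jordan structure of $\tilde e$. A direct check shows that $\tilde e$ acts on $e_{-n},\dots,e_{-1},e_1,\dots,e_n$ through the single chain $e_n\mapsto e_{n-1}\mapsto\dots\mapsto e_2\mapsto e_1+e_{-1}\mapsto -2e_{-2}\mapsto 2e_{-3}\mapsto\dots\mapsto\pm 2e_{-n}$, while $e_1-e_{-1}$ is killed; hence $\tilde e$ has Jordan type $(2n-1,1)$. From the chain one reads off that $[\tilde e^{\,k}]_{ij}\neq 0$ iff $e_i$ sits exactly $k$ steps below $e_j$. Two consequences are crucial: (a) $[\tilde e^{\,k}]_{n,i}=\delta_{k,0}\delta_{i,n}$, because $e_n$ is at the very top of the chain; and (b) for each $k\neq n-1$ there is a unique $i$ with $[\tilde e^{\,k}]_{i,n}\neq 0$, while at $k=n-1$ both $i=1$ and $i=-1$ appear with coefficient $1$ (the branching of the chain).

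Plugging these into the differentiation formulas isolates the $\tilde U$-leading terms. The $\tilde U$-maximising pair has $j=n$ for $dx_{nm}$ and $i=n$ for $dy_{nm}$; fact (a) kills every $k>0$ summand in the $y$ formula, reducing the coefficient to $[\tilde e^{\,2m-2}]_{j,n}$. Fact (b) then determines the surviving column index exactly as the lemma states, with the three regimes $2m-1<n-1$, $=n-1$, $>n-1$ (and analogously $2m-2$ versus $n-1$ for $y_{nm}$) matching the chain travelling inside $\{e_j:j\geq 2\}$, hitting the branching $\{e_{\pm 1}\}$, or descending through $\{e_{-l}:l\geq 2\}$; at the branching one obtains exactly the sum $dF_{n,-1}+dF_{n,1}$ attributed to $dx_{n,n/2}$ for even $n$ and to $dy_{n,(n+1)/2}$ for odd $n$. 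Every remaining non-zero $(i,j)$-contribution either has a smaller row index or sits at the maximal row index with a larger column index, so it is strictly below the leading term in the $\tilde U$-order and lands in the ``lower terms''.

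For $p_n$ one verifies directly that $\tilde e\cdot P=\tilde e'$, with $\tilde e'$ as in Lemma~\ref{pfaff_lemma}, so that the identification $M=F^{(n)}P$, i.e.\ $M_{ij}=F_{i,-j}$, translates the auxiliary lemma into $d_{\tilde e}\,p_n\sim dM_{n,1}-dM_{n,-1}=dF_{n,-1}-dF_{n,1}$, with no other components. The main obstacle will be the book-keeping of the case analysis near the branching---correctly matching $k=2m-1$ or $k=2m-2$ to one of the three regimes in the chain, further split by the parity of $n$---together with the sign-tracking in the $A\mapsto AP$ identification used for $p_n$; neither step is conceptually subtle, but each demands some care.
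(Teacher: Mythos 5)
Your approach is in essence the same as the paper's — both arguments reduce to identifying which paths through the Jordan chain of $\tilde e$ (type $(2n-1,1)$, with the branching at $e_1\pm e_{-1}$) can contribute to the top row of $dF$-coefficients. Your matrix formulas and the chain description are correct, as is the reduction for $p_n$ via $\tilde e_{\mathrm{mat}}\,P=\tilde e'_{\mathrm{mat}}$ and Lemma~\ref{pfaff_lemma}. The $y$-case is also handled correctly: fact (a) kills all $k>0$ summands, so the differential is a bona fide sum $\sum_b[\tilde e^{2m-2}]_{b,n}\,dF_{n,b}$ in which the $dF_{n,b}$ are genuinely distinct elements of $\tilde U$.

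There is, however, a real gap in the $x$-case. The $dF_{ji}$ appearing in $d_{\tilde e}x_{nm}=2m\sum_{i,j}[\tilde e^{2m-1}]_{ij}\,dF_{ji}$ are not linearly independent: $F_{ji}=-F_{-i,-j}$ in $\mathfrak{o}_{2n}$. In particular the pair $(i,j)=(-n,\,n-1)$ contributes $[\tilde e^{2m-1}]_{-n,n-1}\,dF_{n-1,-n}$, and $dF_{n-1,-n}=-dF_{n,-n+1}$, which is the \emph{same} cotangent vector as the term you designated as leading. Your closing sentence (``Every remaining non-zero $(i,j)$-contribution either has a smaller row index or sits at the maximal row index with a larger column index'') is therefore false: the aliased contribution lands at precisely the same $\tilde U$-position as the one you singled out, and one must rule out cancellation. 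The paper addresses exactly this — it lists the monomials contributing to $dF_{n-1,-n}$ alongside those contributing to $dF_{n,-n+1}$ and checks, using $dF_{n-1,-n}=-dF_{n,-n+1}$, that all the signs match. The same issue arises for $dx_{n,n/2}$ (even $n$) at $dF_{n,\pm1}$: there the aliased term is $[\tilde e^{n-1}]_{-n,\mp 1}dF_{\mp1,-n}$, and it is the parity of $n$ that guarantees the sign works out (for odd $n$ the analogous $x$-contribution \emph{would} cancel, which is part of why the branching case falls to $y_{n,(n+1)/2}$ instead). Tracking the sign of $(-1)^k$ in the chain $\tilde e^k$ closes this gap and is a small but necessary addition to your argument.
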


\begin{proof}
Among all differentials $dx_{nm}|_{F=\tilde{e}}$, $dy_{nm}|_{F=\tilde{e}}$ and $dp_{n}|_{F=\tilde{e}}$ there is only $dx_{n,n-1}|_{F=\tilde{e}}$ which has a nonzero coefficient at $dF_{n,-n+1}$. The only monomials with a nonzero contribution to $dF_{n,-n+1}$ at $\tilde{e}$ are $$F_{n,-n+1}\cdot F_{-n+1,-n+2}\cdot...\cdot F_{-2,1}\cdot F_{1,2}\cdot...\cdot F_{n-1,n},$$ $$F_{n,-n+1}\cdot F_{-n+1,-n+2}\cdot...\cdot F_{-1,2}\cdot F_{2,3}\cdot...\cdot F_{n-1,n}.$$
Since their degrees are equal to $2n-2$ they can appear only in $x_{n,n-1}$. A similar situation occurs for $dF_{n-1,-n}$ and the corresponding monomials are:
$$F_{n-1,-n}\cdot F_{-n,-n+1}\cdot...\cdot F_{-2,1}\cdot F_{1,2}\cdot...\cdot F_{n-2,n-1},$$
$$F_{n-1,-n}\cdot F_{-n,-n+1}\cdot...\cdot F_{-2,-1}\cdot F_{-1,2}\cdot...\cdot F_{n-2,n-1}.$$
After writing down the differential at $\tilde{e}$ all of the above monomials have the same sign before $dF_{n,-n+1}$ (we make use of $dF_{n-1,-n}=-dF_{n,-n+1}$) hence the contribution is nonzero.

Analogously we prove all the statements till the point when they differ for odd and even values of $n$. In this proof we just consider the case of an even $n$ (the odd case can be proved identically). If we look at $dF_{n,1}$ then the only monomials which can contribute to it after differentiating at $\tilde{e}$ are:
$$F_{n,1}\cdot F_{1,2}\cdot...\cdot F_{n-1,n},$$
$$F_{-n,-n+1}\cdot...\cdot F_{-2,-1}\cdot F_{-1,-n}.$$
For the latter monomial we again notice $dF_{-1,-n}=-dF_{n,1}$. Since $n$ is even their differentials have the same sign at $dF_{n,1}$. Both of them require $m=n/2$ hence it must be $dx_{n,n/2}|_{F=\tilde{e}}$. But at the same time $dx_{n,n/2}|_{F=\tilde{e}}$ has a nonzero coefficient before $dF_{n,-1}$ because of the following monomials:
$$F_{n,-1}\cdot F_{-1,2}\cdot...\cdot F_{n-1,n},$$
$$F_{-n,-n+1}\cdot...\cdot F_{-2,1}\cdot F_{1,-n}.$$
It is easy to see that both $dF_{n,-1}$ and $dF_{n,1}$ appear in $dx_{n,n/2}|_{F=\tilde{e}}$ with the same nonzero coefficient.

Now we consider the Pfaffian-type element $p_n$. The $F$-matrix becomes skew-symmetric under the transformation 
\begin{equation}
F_{ij}\mapsto M_{i,-j}
\end{equation}
\begin{center}
and
\end{center}
\begin{equation}
p_n\mapsto\pm\textsf{pf}\left(M\right),\;\tilde{e}\mapsto\tilde{e}'
\end{equation}
where $M$ is the skew-symmetric image of $F$-matrix (the sign depends on the parity of $n$). From the Lemma $\ref{pfaff_lemma}$ it immediately follows that $dp_n|_{F=\tilde{e}}\sim\left(dF_{n,1}-dF_{n,-1}\right)$.

The remaining statements can be verified in a similar manner.
\end{proof}

This lemma implies the next statement.

\begin{lemma} \label{alg_indep}
The elements $x_{km}$, $y_{km}$ and $p_k$ for $k=1,...,n$ and $m=1,...,k-1$ are algebraically independent elements of $S\left(\mathfrak{o}_{2n}\right)$.
\end{lemma}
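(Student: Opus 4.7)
The plan is to prove, by induction on $n$, the stronger statement that the differentials $d_{\tilde{e}}x_{km}$, $d_{\tilde{e}}y_{km}$, $d_{\tilde{e}}p_k$ (for $k=1,\ldots,n$ and $m=1,\ldots,k-1$) at the principal nilpotent $\tilde{e}\in\mathfrak{o}_{2n}^*$ defined by (\ref{principal_nilp}) are linearly independent in $T^*_{\tilde{e}}\mathfrak{o}_{2n}^*$. Algebraic independence then follows from the standard fact that polynomials in $\mathbb{C}[V]$ whose differentials are linearly independent at some point of $V$ are algebraically independent.

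The base case $n=1$ is trivial, since the collection reduces to the single nonzero element $p_1$. For the inductive step, split the collection into the $2n-1$ \emph{top-level} generators $\{x_{n,m},y_{n,m}\}_{m=1}^{n-1}\cup\{p_n\}$ and the \emph{lower-level} generators $\{x_{k,m},y_{k,m},p_k\}_{k\le n-1}$, the latter all lying in the subalgebra $S(\mathfrak{o}_{2(n-1)})\subset S(\mathfrak{o}_{2n})$ corresponding to indices $|i|,|j|\le n-1$. Hence the differentials of the lower-level generators at $\tilde{e}$ involve only $dF_{ij}$ with $|i|,|j|\le n-1$ and contain no $dF_{n,*}$ term.

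By Lemma~\ref{differentials_form}, the $2n-1$ top-level differentials have pairwise distinct leading terms in $\textrm{span}\{dF_{n,j}:j\in\{-n+1,\ldots,-1,1,\ldots,n\}\}$: outside the ``middle'' each leading component is a single basis vector $dF_{n,j}$ occupying one of the $2n-3$ available values of $j$, while the two generators passing through the middle (namely $p_n$ and whichever of $x_{n,n/2}$, $y_{n,(n+1)/2}$ is applicable) produce the leading combinations $dF_{n,-1}\pm dF_{n,1}$, linearly independent since $\det\begin{pmatrix}1&1\\1&-1\end{pmatrix}\ne 0$. For each top-level differential, the remaining $dF_{n,*}$ contributions (those counted as ``lower than'' the leading term) are strictly lower in the partial order, so the full $dF_{n,*}$-components of the $2n-1$ top-level differentials form a triangular system (with one $2\times 2$ middle block) of maximal rank in the $(2n-1)$-dimensional space $\textrm{span}\{dF_{n,j}\}$.

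Projecting any alleged linear relation among the full collection of differentials onto the quotient $T^*_{\tilde{e}}\mathfrak{o}_{2n}^*/\textrm{span}\{dF_{ij}:i<n\}$ kills the lower-level contributions and forces, by the triangular structure just described, all top-level coefficients to vanish. What remains is a linear relation among the lower-level differentials, which, viewed in $T^*_{\tilde{e}^{(n-1)}}\mathfrak{o}_{2(n-1)}^*$ (the restriction of $\tilde{e}$ to $\mathfrak{o}_{2(n-1)}$ being the principal nilpotent of the latter), is killed by the inductive hypothesis. The only substantive ingredient beyond pure bookkeeping is Lemma~\ref{differentials_form} itself; the non-degeneracy of the $2\times 2$ middle block is immediate by inspection.
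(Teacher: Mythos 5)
Correct, and this is essentially the paper's argument. The paper's proof of Lemma~\ref{alg_indep} is a one-line appeal to Lemma~\ref{differentials_form} "for $n, n-1, \ldots, 1$", which is exactly the induction you spell out: the level-$k$ differentials are, by Lemma~\ref{differentials_form} applied to the copy of $\mathfrak{o}_{2k}\subset\mathfrak{o}_{2n}$, upper-triangular (with one $2\times2$ middle block) in the coordinates $dF_{k,j}$, while the level-$<k$ differentials involve no $dF_{k,*}$ at all, so the whole collection is linearly independent. Your version just makes explicit the projection/downward-induction bookkeeping and the nondegeneracy of the middle $2\times2$ block $\begin{pmatrix}1&1\\1&-1\end{pmatrix}$ that the paper leaves implicit.
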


\begin{proof}
From the description of the differentials of these elements at the principal nilpotent element $\tilde{e}$ provided by Lemma $\ref{differentials_form}$ for $n,n-1,...,1$ we see that they are linearly independent, hence the Lemma follows.
\end{proof}

\subsection{The proof of Theorem 1.}
\begin{proof}
In Proposition 5 ($\cite{RZ}$) we showed that $\mathcal{A}^+\subset\lim\limits_{\varepsilon\rightarrow0}\mathcal{A}_{\mu\left(\varepsilon\right)}$. So it is sufficient to show that the subalgebras $\mathcal{A}^+$ and $\lim\limits_{\varepsilon\rightarrow0}\mathcal{A}_{\mu\left(\varepsilon\right)}$ have the same  Poincar\'e series. 

The Poincar\'e series for $\lim\limits_{\varepsilon\rightarrow0}\mathcal{A}_{\mu\left(\varepsilon\right)}$ coincides with such for $\mathcal{A}_{\mu}$ with generic $\mu\in\mathfrak{h}^{reg}$, hence, equals to:
\begin{equation}
P_{\mathcal{A}_{\mu\left(\varepsilon\right)}}\left(x\right)=\prod\limits_{k=1}^{n-1}\frac{1}{\left(1-x^{2k-1}\right)^{n-k}\left(1-x^{2k}\right)^{n-k}}\cdot\prod\limits_{k=1}^n\frac{1}{1-x^k}.
\end{equation}
On the other hand, the subalgebra $\mathcal{A}^+$ contains the elements $\varphi_k\left(s_{11}^{(2m-1)}\right)$ (of the degree $2m-1$), as well as the central generators $\mathrm{Tr}\left(\left(\mathcal{F}^{(k)}\right)^{2m}\right)$ (of the degree $2m$), for all $k=1,...,n$, $m=1,...,k$, and the symmetrization of the Pfaffians  $Symm(p_k)=Symm\left(\sqrt{\mathrm{det}\left(\left(F^{(k)}\right)^{2k}\right)}\right)$ (of the degree $k$) for all $k=1,...,n$ (here $Symm:S\left(\mathfrak{g}\right)\to U\left(\mathfrak{g}\right)$ is the symmetrization map from the symmetric algebra associated with $\mathfrak{g}$ to the universal enveloping algebra). So for proving the equality of the it remains to check that the above elements of $\mathcal{A}^+$ are algebraically independent. Moreover, it is sufficient to show that the images of these elements in the associated graded algebra $\text{gr}\ U\left(\mathfrak{o}_{2n}\right)=S\left(\mathfrak{o}_{2n}\right)$ are algebraically independent. Note that the latter follows from Lemma $\ref{alg_indep}$: indeed, we have $\textrm{gr}\left[\varphi_k\left(s_{11}^{(2m-1)}\right)\right]=y_{km}$, $\textrm{gr}\left[\mathrm{Tr}\left(\left(\mathcal{F}^{(k)}\right)^{2m}\right)\right]=x_{km}$ and $\textrm{gr}\left[Symm(p_k)\right]=p_k$.


\end{proof}

\subsection{The indexing of the spectrum by GT-patterns.}

First, we note that the proof of Theorem B in \cite{RZ} works for Theorem~2 of the present paper without any changes.

Recall that $Y^+(2)$-module $L'(z_1,...,z_n,\delta)$ is defined for $\alpha_1, ..., \alpha_n$, $\beta_1, ..., \beta_n$ as:
\begin{equation}
L'(z_1,...,z_n,\delta):=L(z_1+\alpha_1,z_1+\beta_1)\otimes...\otimes L(z_n+\alpha_n,z_n+\beta_n)\otimes W(\delta),
\end{equation}
where $z_i$'s are free parameters and $W(\delta)$ is the 1-dimensional representation of $Y^+(2)$ introduced before. Take $z_i=tu_i$ where $u_1<u_2<\ldots<u_n$ are fixed real numbers and $t\in\mathbb{R}_{>0}$. Let $\mathcal{B}^+(z_1,...,z_n)$ be the image of $\mathcal{B}^+$ in $\textrm{End}\left(L'(z_1,...,z_n,\delta)\right)$. The images of generators of $\mathcal{B}^+$ in the space of endomorphisms depend on $z_i$'s. We can replace the generators $s_{11}^{(2m+1)}$ ($m\in\mathbb{Z}_{\geq0}$) of $\mathcal{B}^+$ with $t^{-2m}s_{11}^{(2m+1)}$. The algebra generated by them still coincide with $\mathcal{B}^+(z_1,...,z_n)$ but now we can consider the desired limit by making $t\to+\infty$. Moreover, in \cite[Corollary 1]{RZ} we showed that the limit of the eigenbasis as $t\to+\infty$ ($z_i-z_j\to+\infty$ ($i>j$)) coincides with the product of weight bases for each $V_{\alpha_i-\beta_i}$ ($i=1,...,k$).

From the definition of $L'$ it follows that for every $\left(z_1,...,z_n\right)$ we have
\begin{equation}
L'\left(z_1,...,z_n,\delta\right)\simeq V_{\alpha_1-\beta_1}\otimes...\otimes V_{\alpha_n-\beta_n}.
\end{equation}
For generic $\left(z_1,...,z_n\right)$ subalgebra $\mathcal{B}^+$ acts on $L'\left(z_1,...,z_n,\delta\right)$ with simple spectrum.  On the other hand, the limit shift of argument subalgebra $\lim\limits_{\varepsilon\rightarrow0}\mathcal{A}_{\mu\left(\varepsilon\right)}=\mathcal{A}^+$ acts with simple spectrum on the multiplicity space $V_{\lambda_k}^{\lambda_{k-1}}$ (see \cite{KR}). The latter is identified with $L'(z_1,...,z_n,\delta)$ when all $z_i=0$.

Hence, by choosing an appropriate path connecting $z_i=0$ with the limit at $\infty$, the basis of $V_{\lambda^{(k)}}^{\lambda^{(k-1)}}$ can be naturally indexed by the collections of integers $\lambda_i'$ ($i=1,...,k$) such that $\alpha_i\geq\lambda_i'\geq\beta_i$. So it means that we have
$$0\geq\lambda_1'\geq\max\left(\lambda_1,\mu_1\right)$$
$$\min\left(\lambda_1,\mu_1\right)\geq\lambda_2'\geq\max\left(\lambda_2,\mu_2\right)$$
$$...$$
$$\min\left(\lambda_{k-1},\mu_{k-1}\right)\geq\lambda_k'\geq\lambda_k.$$
Continuing this restriction further we get the parametrization of the spectrum with Gelfand-Tsetlin patterns of type D.

\section{Discussion}
In \cite[Conjecture~1]{RZ} we conjectured that the indexing of the eigenvectors of $\mathcal{A}^\mp$ in $V_\lambda$ does not actually depend on the choice of the path $z(t)$ connecting $0$ to $\infty$. Here we present some speculation on this conjecture. 

The property that $\mathcal{B}^\mp$ has simple spectrum in $L'(z_1,\ldots,z_n,\delta)$ is in fact the intersection of the following two Zariski open conditions on $(z_1,\ldots,z_n)\in\mathbb{C}^n$:

\begin{enumerate}
    \item $\mathcal{B}^\mp$ has a cyclic vector in $L'(z_1,\ldots,z_n,\delta)$;
    \item $\mathcal{B}^\mp$ acts on $L'(z_1,\ldots,z_n,\delta)$ semisimply.
\end{enumerate}

For our conjecture, it is sufficient to show that the complement of this subset is of complex codimension~$2$ in some simply connected neighborhood of $\mathbb{R}_{\ge0}^n$ in $\mathbb{C}^n$. Indeed, this will mean that the spectrum of $\mathcal{B}^\mp$ in $L'(z_1,\ldots,z_n,\delta)$ is an unramified covering of a simply connected real manifold outside real codimension $4$, hence this covering is in fact trivial. So the parallel transport of the eigenvectors along any path connecting given $2$ points in $\mathbb{R}_{\ge0}^n$ (and avoiding the ``bad'' points) is the same.

First, one can show that for $z_i\in\mathbb{R}_{\ge0}$ the algebra $\mathcal{B}^\mp$ acts on $L'(z_1,\ldots,z_n,\delta)$ by normal operators with respect to some Hermitian scalar product. Hence the action of $\mathcal{B}^\mp$ on $L'(z_1,\ldots,z_n,\delta)$ is semisimple for all $(z_1,\ldots,z_n)$ in some simply connected neighborhood of $\mathbb{R}_{\ge0}^n$. 

Second, for the cyclic vector property, one can consider the nilpotent degeneration of $\mathcal{B}^\mp$ i.e. some $1$-parametric family of commutative subalgebras $\mathcal{B}^\mp(t)$ which are conjugate to each other for all nonzero $t$, and such that $\mathcal{B}^\mp(0)$ is generated by elements of the weight $-2$ with respect to the $\mathfrak{sl}_2$-weight grading. Once we have such degeneration, it is sufficient for the cyclic vector property that the highest vector of $L'(z_1,\ldots,z_n,\delta)$ is cyclic with respect to $\mathcal{B}^\mp(0)$. At the moment, we can construct such degeneration only in the symplectic case (i.e. only for $\mathcal{B}^-$), but we believe such degeneration exists in the orthogonal case as well. 

Next, we can apply the Shapovalov form technique to write explicitly the polynomial condition on $(z_,\ldots,z_n)$ expressing that the given weight subspace of $L'(z_1,\ldots,z_n,\delta)$ is not generated by $\mathcal{B}^\mp(0)$ from the highest vector. The result is the set of linear conditions $z_i-z_j=k,\ k\in\mathbb{Z}$, so at least outside this union of the hyperplanes $z_i-z_j=k$, the action of $\mathcal{B}^\mp$ on $L'(z_1,\ldots,z_n,\delta)$ is cyclic. On the other hand, each of these hyperplanes contain integer points where the action of $\mathcal{B}^\mp$ is the action of some limit shift of argument subalgebra on some multiplicity space, so the spectrum of $\mathcal{B}^\mp$ is simple by the general results of \cite{KR}. This means that the $\mathcal{B}^\mp$ has cyclic vector in $L'(z_1,\ldots,z_n,\delta)$ for generic $(z_1,\ldots,z_n)$ on each of the hyperplanes  $z_i-z_j=k$ as well. Hence the cyclic vector property holds for all $(z_1,\ldots,z_n)\in\mathbb{C}^n$ outside complex codimension $2$, so we are done.

We plan to give a detailed exposition of the above arguments in our future work.

\addcontentsline{toc}{section}{References}
\bibliographystyle{halpha}
\bibliography{D_case_bibliography}

\footnotesize{
{\bf L.R.}: National Research University
Higher School of Economics, Russian Federation,\\
Department of Mathematics, 6 Usacheva st, Moscow 119048;\\
Institute for Information Transmission Problems of RAS;\\
{\tt leo.rybnikov@gmail.com}}

\footnotesize{
{\bf M.Z.}: National Research University
Higher School of Economics, Russian Federation,\\
Department of Mathematics, 6 Usacheva st, Moscow 119048;\\
{\tt zavalin.academic@gmail.com}}

\end{document}